\date{\today}
\newtheorem{theorem}{Theorem}
\newtheorem{proposition}{Proposition}
\newtheorem{corollary}{Corollary}
\newtheorem{lemma}{Lemma}
\theoremstyle{definition}
\newtheorem{definition}{Definition}
\begin{document}

\title[On feebly compact semitopological semilattice $\exp_n\lambda$]{On feebly compact semitopological semilattice $\exp_n\lambda$}
\author{Oleg Gutik and Oleksandra Sobol}
\address{Faculty of Mechanics and Mathematics,
National University of Lviv, Universytetska 1, Lviv, 79000, Ukraine}
\email{oleg.gutik@lnu.edu.ua, ovgutik@yahoo.com, o.yu.sobol@gmail.com}

\keywords{Semitopological semilattice, feebly compact, $H$-closed, infra $H$-closed, $Y$-compact, sequentially countably pracompact, selectively sequentially feebly compact, selectively feebly compact, sequentially feebly compact, the Sunflower Lemma, $\Delta$-system}

\subjclass[2010]{Primary 22A26, 22A15,  Secondary 54D10, 54D30, 54H12}

\begin{abstract}
We study feebly compact shift-continous topologies on the semilattice $\left(\exp_n\lambda,\cap\right)$. It is proved that such $T_1$-topology  is sequentially pracompact if and only if it is $\mathfrak{D}(\omega)$-compact.
\end{abstract}

\maketitle



We shall follow the terminology of~\cite{Carruth-Hildebrant-Koch-1983-1986, Engelking-1989, Gierz-Hofmann-Keimel-Lawson-Mislove-Scott-2003, Ruppert-1984}. If $X$ is a topological space and $A\subseteq X$, then by $\operatorname{cl}_X(A)$ and $\operatorname{int}_X(A)$ we denote the closure and the interior of $A$ in $X$, respectively. By $\omega$ we denote the first infinite cardinal and by $\mathbb{N}$ the set of positive integers. By $\mathfrak{D}(\omega)$ and $\mathbb{R}$ we denote an infinite countable discrete space and the real numbers with the usual topology, respectively.

\smallskip

A subset $A$ of a topological space $X$ is called \emph{regular open} if $\operatorname{int}_X(\operatorname{cl}_X(A))=A$.

\smallskip

We recall that a topological space $X$ is said to be
\begin{itemize}
  \item \emph{semiregular} if $X$ has a base consisting of regular open subsets;
  \item \emph{compact} if each open cover of $X$ has a finite subcover;
  \item \emph{sequentially compact} if each sequence $\{x_n\}_{n\in\mathbb{N}}$ of $X$ has a convergent subsequence in $X$;
  \item \emph{countably compact} if each open countable cover of $X$ has a finite subcover;
  \item \emph{$H$-closed} if $X$ is a closed subspace of every Hausdorff topological space in which it contained;
  \item \emph{infra H-closed} provided that any continuous image of $X$ into any first countable Hausdorff space is closed (see \cite{Hajek-Todd-1975});
  \item \emph{totally countably pracompact} if there exists a dense subset $D$ of the space $X$ such that each sequence of points of the set $D$ has a subsequence with the compact closure in $X$;
  \item \emph{sequentially pracompact} if there exists a dense subset $D$ of the space $X$ such that each sequence of points of the set $D$ has a convergent subsequence \cite{Gutik-Ravsky-20??};
  \item \emph{countably compact at a subset} $A\subseteq X$ if every infinite subset $B\subseteq A$  has  an  accumulation  point $x$ in $X$;
  \item \emph{countably pracompact} if there exists a dense subset $A$ in $X$  such that $X$ is countably compact at $A$;
  \item \emph{selectively sequentially feebly compact} if for every family $\{U_n\colon n\in \mathbb{N}\}$ of non-empty open subsets of $X$, one can choose a point $x_n\in U_n$ for every $n\in \mathbb{N}$ in such a way that the sequence $\{x_n\colon n\in \mathbb{N}\}$ has a convergent subsequence (\cite{Dorantes-Aldama-Shakhmatov-2017});
  \item \emph{sequentially feebly compact} if for every family $\{U_n\colon n\in \mathbb{N}\}$ of non-empty open subsets of $X$, there exists an infinite set $J\subseteq \mathbb{N}$ and a point $x\in X$ such that the set $\{n\in J\colon W\cap U_n=\varnothing\}$ is finite for every open neighborhood $W$ of $x$ (see \cite{Dow-Porter-Stephenson-Woods-2004});
  \item \emph{selectively feebly compact} for each sequence $\{U_n\colon n\in \mathbb{N}\}$ of non-empty open subsets of $X$, one can choose a point $x\in X$ and a point $x_n\in U_n$ for each $n\in \mathbb{N}$ such that the set $\{n\in \mathbb{N}\colon x_n\in W\}$ is infinite for every open neighborhood $W$ of $x$ (\cite{Dorantes-Aldama-Shakhmatov-2017});
  \item \emph{feebly compact} (or \emph{lightly compact}) if each locally finite open cover of $X$ is finite~\cite{Bagley-Connell-McKnight-Jr-1958};
  \item $d$-\emph{feebly compact} (or \emph{\textsf{DFCC}}) if every discrete family of open subsets in $X$ is finite (see \cite{Matveev-1998});
  \item \emph{pseudocompact} if $X$ is Tychonoff and each continuous real-valued function on $X$ is bounded;
  \item $Y$-\emph{compact} for some topological space $Y$, if $f(X)$ is compact for any continuous map $f\colon X\to Y$.
\end{itemize}

The following diagram describes relations between the above defined classes of topological spaces.

\begin{equation*}
\xymatrix{
*+[F]{\begin{array}{c}
                                                          \hbox{\textbf{\small{sequentially}}}\\
                                                          \hbox{\textbf{\small{compact}}}
                                                        \end{array}}\ar@/_1pc/[dd]\ar@/^1pc/[rd] &*+[F]{\begin{array}{c}
                                                          \hbox{\textbf{\small{compact}}}
                                                        \end{array}
}\ar[d]\ar@/^2pc/[dr] & \\
 &*+[F]{\begin{array}{c}
                                                          \hbox{\textbf{\small{countably}}}\\
                                                          \hbox{\textbf{\small{compact}}}
                                                        \end{array}}\ar[d]
                                                        \ar@/^4pc/[lu]|-{\begin{array}{c}
                                                          \small{T_3\hbox{-space}}\\
                                                          \small{\hbox{+scattered}}
                                                        \end{array}} \ar@/^1pc/[lu]|-{\hbox{\small{sequential}}} &
 *+[F]{\begin{array}{c}
                                                          \hbox{\textbf{\small{totally}}}\\
                                                          \hbox{\textbf{\small{countably}}}\\
                                                          \hbox{\textbf{\small{pracompact}}}
                                                        \end{array}}\ar@/^2pc/[dl]\\
*+[F]{\begin{array}{c}
                                                          \hbox{\textbf{\small{sequentially}}}\\
                                                          \hbox{\textbf{\small{pracompact}}}
                                                        \end{array}}\ar[d]\ar[r]
 &*+[F]{\begin{array}{c}
                                                          \hbox{\textbf{\small{countably}}}\\
                                                          \hbox{\textbf{\small{pracompact}}}
                                                        \end{array}}\ar@/_5.5pc/[dd] &  \\
*+[F]{\begin{array}{c}
                                                          \hbox{\textbf{\small{selectively}}}\\
                                                          \hbox{\textbf{\small{sequentially}}}\\
                                                          \hbox{\textbf{\small{feebly compact}}}
                                                        \end{array}}\ar[r]\ar[d]&
*+[F]{\begin{array}{c}
                                                          \hbox{\textbf{\small{sequentially}}}\\
                                                          \hbox{\textbf{\small{feebly compact}}}
                                                        \end{array}} & *+[F]{\hbox{\textbf{\small{$d$-feebly compact}}}} \ar@/_-1.9pc/[ld]|-{\small{\hbox{quasi-regular}}} &
                                                        *+[F]{\hbox{\textbf{\small{H-closed}}}}\ar\ar@/_-2.2pc/[lld]\ar@/_8pc/[uuull]|-{\small{\hbox{regular}}}\\
*+[F]{\begin{array}{c}
                                                          \hbox{\textbf{\small{selectively}}}\\
                                                          \hbox{\textbf{\small{feebly compact}}}
                                                        \end{array}}\ar[r]                                                          &*+[F]{\hbox{\textbf{\small{feebly compact}}}}\ar@/_6.3pc/[uuu]|-{\small{\hbox{normal}}} \ar@/_-.2pc/[u]|-{\hbox{\footnotesize{Fr\'{e}chet-Urysohn}}}\ar@/_.2pc/[ur]\ar[d] 
                                                        & &\\
&*+[F]{\hbox{\textbf{\small{infra H-closed}}}}\ar@/_.1pc/[rr]|-{\footnotesize{\hbox{Tychonoff}}}\ar[d]& & *+[F]{\hbox{\textbf{\small{pseudocompact}}}}\ar@/_.2pc/[ull]\\
&*+[F]{\hbox{\textbf{\small{$\mathbb{R}$-compact}}}}\ar[d] &&\\
&*+[F]{\hbox{\textbf{\small{$\mathfrak{D}(\omega)$-compact}}}} &&\\
&&&
}
\end{equation*}


A \emph{semilattice} is a commutative semigroup of idempotents. On  a semilattice $S$ there exists a natural partial order: $e\leqslant f$
\emph{if and only if} $ef=fe=e$. For any element $e$ of a semilattice $S$ we put
\begin{equation*}
  {\uparrow}e=\left\{f\in S\colon e\leqslant f\right\}.
\end{equation*}

A {\it topological} ({\it semitopological}) {\it semilattice} is a topological space together with a continuous (separately continuous) semilattice operation. If $S$ is a~semilattice and $\tau$ is a topology on $S$ such that $(S,\tau)$ is a topological semilattice, then we shall call $\tau$ a \emph{semilattice} \emph{topology} on $S$, and if $\tau$ is a topology on $S$ such that $(S,\tau)$ is a semitopological semilattice, then we shall call $\tau$ a \emph{shift-continuous} \emph{topology} on~$S$.

For an arbitrary positive integer $n$ and an arbitrary non-zero cardinal $\lambda$ we put
\begin{equation*}
  \exp_n\lambda=\left\{A\subseteq \lambda\colon |A|\leqslant n\right\}.
\end{equation*}

It is obvious that for any positive integer $n$ and any non-zero cardinal $\lambda$ the set $\exp_n\lambda$ with the binary operation $\cap$ is a semilattice. Later in this paper by $\exp_n\lambda$ we shall denote the semilattice $\left(\exp_n\lambda,\cap\right)$.

This paper is a continuation of \cite{Gutik-Sobol-2016} and \cite{Gutik-Sobol-2016a}. In \cite{Gutik-Sobol-2016} we studied feebly compact semitopological semilattices $\exp_n\lambda$. Therein, all compact semilattice $T_1$-topologies on $\exp_n\lambda$ were described. In \cite{Gutik-Sobol-2016} it was proved that for an arbitrary positive integer $n$ and an arbitrary infinite cardinal $\lambda$ every $T_1$-semitopological countably compact semilattice $\exp_n\lambda$ is a compact topological semilattice. Also, there  we constructed a countably pracompact $H$-closed quasiregular non-semiregular topology $\tau_{\operatorname{\textsf{fc}}}^2$ such that $\left(\exp_2\lambda,\tau_{\operatorname{\textsf{fc}}}^2\right)$ is a semitopological semilattice with the discontinuous semilattice operation and show that for an arbitrary positive integer $n$ and an arbitrary infinite cardinal $\lambda$ a semiregular feebly compact semitopological semilattice $\exp_n\lambda$ is a compact topological semilattice. In \cite{Gutik-Sobol-2016a} we proved that for any shift-continuous $T_1$-topology $\tau$ on $\exp_n\lambda$ the following conditions are equivalent:
$(i)$ $\tau$ is countably pracompact; $(ii)$ $\tau$ is feebly compact; $(iii)$ $\tau$ is $d$-feebly compact; $(iv)$~$\left(\exp_n\lambda,\tau\right)$ is an $H$-closed space.

In \cite{Artico-Marconi-Pelant-Rotter-Tkachenko-2002} was proved that \emph{every pseudocompact topological group is sequentially feebly compact}. Also, by Corollary~4.6 of \cite{Dorantes-Aldama-Shakhmatov-2017}, the Cantor cube $D^\mathfrak{c}$ is selectively sequentially feebly compact.
By \cite[Theorem 3.10.33]{Engelking-1989}, $D^\mathfrak{c}$ is not sequentially compact. Therefore, the compact topological group $G=D^\mathfrak{c}$ is selectively sequentially feebly compact but not sequentially feebly compact. Also, there exists a dense subgroup of $\mathbb{Z}^\mathfrak{c}_2$, where $\mathbb{Z}^\mathfrak{c}_2$ is the $\mathfrak{c}$-power of the cyclic two-elements group, which is selectively pseudocompact  but not selectively sequentially pseudocompact \cite{Shakhmatov-Yanez-2017arxiv}.
This and our above results of \cite{Gutik-Sobol-2016} and \cite{Gutik-Sobol-2016a} motivates us to investigate selective (sequential) feeble compactness of the semilattice $\exp_n\lambda$ as a semitopological semigroup.

Namely, we show that a shift-continuous $T_1$-semitopological semilattice $\exp_n\lambda$ is sequentially countably pracompact if and only if it is $\mathfrak{D}(\omega)$-compact.

\begin{lemma}\label{lemma-1}
Let $n$ be any positive integer and $\lambda$ be any infinite cardinal. Then the set  of isolated points of a $T_1$-semitopological semilattice $\exp_n\lambda$ is dense in it.
\end{lemma}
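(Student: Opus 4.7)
The plan is to show that every non-empty open subset of $\exp_n\lambda$ contains an isolated point, which is equivalent to density of the isolated points. The only piece of the topology I shall use is that for every $a\in\lambda$ the set
\begin{equation*}
U_a=\{B\in\exp_n\lambda\colon a\in B\}
\end{equation*}
is open. To verify this, note that the map $B\mapsto\{a\}\cap B$ is continuous (by shift-continuity of $\cap$) and takes values in the two-element set $\{\varnothing,\{a\}\}$. Since $(\exp_n\lambda,\tau)$ is $T_1$, $\{\varnothing\}$ is closed, so its preimage $\{B\in\exp_n\lambda\colon a\notin B\}=\exp_n\lambda\setminus U_a$ is closed, hence $U_a$ is open.

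Armed with the sets $U_a$, the rest of the argument is short. Let $U$ be any non-empty open subset of $\exp_n\lambda$ and pick $A\in U$ with $|A|$ maximal; such an $A$ exists because the cardinalities take values in $\{0,1,\dots,n\}$. Since $|A|$ is finite, the set
\begin{equation*}
V=U\cap\bigcap_{a\in A}U_a
\end{equation*}
is a finite intersection of open sets, hence open, and it plainly contains $A$. For any $B\in V$ we have $A\subseteq B$ together with $|B|\leqslant|A|$ by maximality, forcing $B=A$. Thus $V=\{A\}$ is open, so $A$ is an isolated point of $\exp_n\lambda$ lying in $U$, as required.

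The only step worth singling out as an obstacle is the manufacture of the clopen sets $U_a$: this is precisely where the $T_1$-hypothesis is consumed, since without it $\{\varnothing\}$ need not be closed and the preimage argument collapses. Once the $U_a$ are available, density of the isolated points follows from a purely combinatorial maximality argument resting on the bound $|A|\leqslant n$, and no further hypothesis on $\tau$ or on $\lambda$ is needed.
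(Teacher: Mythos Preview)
Your proof is correct and follows essentially the same route as the paper's: both pick a maximal element $A$ of a given non-empty open set $U$ and observe that $\{A\}={\uparrow}A\cap U$ is open. The only difference is cosmetic---the paper cites the fact that ${\uparrow}A$ is open from an earlier paper, whereas you derive it from scratch via the shift $B\mapsto\{a\}\cap B$ and the $T_1$ hypothesis (noting that ${\uparrow}A=\bigcap_{a\in A}U_a$), and you select $A$ by maximal cardinality rather than maximality in the order, which amounts to the same thing here.
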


\begin{proof}
Fix an arbitrary non-empty open subset $U$ of $\exp_n\lambda$. There exists $y\in\exp_n\lambda$ such that ${\uparrow}y\cap U=\{y\}$. By Proposition~1$(iii)$ from \cite{Gutik-Sobol-2016}, ${\uparrow}y$ is an open-and-closed subset of $\exp_n\lambda$ and hence $y$ is an isolated point in $\exp_n\lambda$.
\end{proof}

A family of non-empty sets $\{A_i\colon i\in \mathscr{I}\}$  is called a \emph{$\Delta$-system} (a \emph{sunflower} or a \emph{$\Delta$-family}) if the pairwise intersections of the members are the same, i.e., $A_i\cap A_j=S$ for some set $S$ (for $i\neq j$ in $\mathscr{I}$) \cite{Komjath-Totik-2006}.
The following statement is well known as the \emph{Sunflower Lemma} or the \emph{Lemma about a $\Delta$-system} (see \cite[p. 107]{Komjath-Totik-2006}).

\begin{lemma}\label{Subflower_Lemma}
Every infinite family of $n$-element sets $(n<\omega)$ contains an infinite $\Delta$-subfamily.
\end{lemma}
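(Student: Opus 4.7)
My plan is to prove the Sunflower Lemma by induction on $n$, the common size of the sets. The dichotomy driving the argument is whether some element is contained in infinitely many members of the family or not; the first case feeds the induction, the second produces a disjoint subfamily directly.

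For the base case $n=1$, a family of singletons either has some element $x$ repeated infinitely often, in which case the subfamily of all copies of $\{x\}$ is a $\Delta$-system with core $\{x\}$, or every element appears only finitely often, in which case the distinct singletons form an infinite pairwise disjoint (hence $\Delta$) subfamily with core $\varnothing$.

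For the inductive step, assume the result holds for all $k<n$ and let $\{A_i\colon i\in\mathscr{I}\}$ be an infinite family of $n$-element sets. I would split into two cases. In the first case, some $x$ lies in an infinite subfamily $\{A_i\colon i\in\mathscr{I}'\}$; then $\{A_i\setminus\{x\}\colon i\in\mathscr{I}'\}$ is an infinite family of $(n-1)$-element sets, so by the inductive hypothesis it contains an infinite $\Delta$-subfamily with some core $S$. Re-attaching $x$ yields an infinite $\Delta$-subfamily of the original family with core $S\cup\{x\}$. In the second case, every element of $\bigcup_{i\in\mathscr{I}} A_i$ belongs to only finitely many $A_i$; then I construct an infinite pairwise disjoint (hence $\Delta$, with core $\varnothing$) subfamily greedily: pick any $A_{i_1}$, note that at most finitely many $A_i$ meet $A_{i_1}$ (since $A_{i_1}$ has $n$ elements and each is in only finitely many members), discard them, pick $A_{i_2}$ from the remaining infinite family, and iterate.

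The argument is essentially routine; the only delicate point to state cleanly is that in the second case finiteness of the incidence multiplicity for each element guarantees that at each stage only finitely many sets are ruled out, so the greedy selection never gets stuck and yields infinitely many pairwise disjoint members. No further obstacle is anticipated, and the proof fits in a few lines once the induction and the dichotomy are set up.
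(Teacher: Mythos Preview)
Your argument is correct and is the standard inductive proof of the Sunflower Lemma. Note, however, that the paper does not actually give its own proof of this statement: it is merely quoted as a well-known result with a reference to \cite[p.~107]{Komjath-Totik-2006}, so there is nothing in the paper to compare your proof against. Your write-up is a faithful rendering of the classical proof found in that reference and elsewhere.
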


\begin{proposition}\label{proposition-2}
Let $n$ be any positive integer and $\lambda$ be any infinite cardinal. Then every
feebly compact $T_1$-semitopological semilattice $\exp_n\lambda$ is sequentially pracompact.
\end{proposition}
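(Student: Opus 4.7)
The plan is to take as the dense set $D$ in the definition of sequential pracompactness the set of isolated points of $\exp_n\lambda$, which is dense by Lemma~\ref{lemma-1}. Given a sequence $\{A_k\}_{k\in\mathbb{N}}\subseteq D$, I first handle the trivial case: if some value is repeated infinitely often, that constant subsequence converges. Otherwise, after passing to a subsequence I may assume the $A_k$ are pairwise distinct, and then Lemma~\ref{Subflower_Lemma} yields a further subsequence $\{A_{k_i}\}_{i\in\mathbb{N}}$ forming a $\Delta$-system with common core $S\in\exp_n\lambda$. The central claim is that $A_{k_i}\to S$.

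Suppose for contradiction that some open neighborhood $W$ of $S$ avoids infinitely many $A_{k_i}$; after re-indexing, $A_{k_i}\notin W$ for every $i$. Since each $A_{k_i}$ is isolated, the family $\bigl\{\{A_{k_i}\}\colon i\in\mathbb{N}\bigr\}$ consists of pairwise-disjoint non-empty open sets, so by feeble compactness it fails to be locally finite and therefore has an accumulation point $x\in\exp_n\lambda$, meaning every neighborhood of $x$ contains infinitely many $A_{k_i}$. I then identify $x$ via two applications of separate continuity of $\cap$. For any $\alpha\in\lambda$, the map $f_\alpha\colon y\mapsto y\cap\{\alpha\}$ is continuous. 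If $\alpha\in x\setminus S$, the $\Delta$-property forces $\alpha\in A_{k_i}$ for at most one $i$, so $f_\alpha(A_{k_i})=\varnothing$ for cofinitely many $i$; then a $T_1$-neighborhood of $f_\alpha(x)=\{\alpha\}$ avoiding $\varnothing$ pulls back to a neighborhood of $x$ meeting only finitely many $A_{k_i}$, a contradiction. Hence $x\subseteq S$. Similarly, the continuous map $g\colon y\mapsto y\cap S$ fixes $x$ (as $x\subseteq S$) and sends every $A_{k_i}$ to $S$, so every neighborhood of $x$ must contain the point $S$; by $T_1$-separation this forces $x=S$. But this contradicts the choice of $W\ni S$ with $A_{k_i}\notin W$ for all $i$.

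The main obstacle I anticipate is bridging the gap between the accumulation point that feeble compactness delivers and the sharper conclusion that a subsequence actually \emph{converges}, since $\exp_n\lambda$ is not known to be first countable at $S$. The $\Delta$-system structure furnishes exactly the rigidity needed: its core $S$ is forced to be the unique possible accumulation point of any sub-subfamily, so any would-be "bad" subsequence of the sunflower still produces $S$ as its accumulation point and thus must land inside every neighborhood of $S$ cofinitely. This promotes the existence of an accumulation point into genuine convergence of the entire sunflower $\{A_{k_i}\}$ to $S$, which is what sequential pracompactness requires.
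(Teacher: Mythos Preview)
Your argument is correct, modulo one small technical point: Lemma~\ref{Subflower_Lemma} as stated requires a family of sets of a \emph{fixed} cardinality, whereas the isolated points you collect may have varying sizes $\leqslant n$; insert a pigeonhole step to pass first to a subsequence of constant size (or simply take $D=\exp_n\lambda\setminus\exp_{n-1}\lambda$, which is dense, consists of isolated points, and contains only $n$-element sets).

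Your route genuinely differs from the paper's. The paper argues by contradiction, uses $D=\exp_n\lambda\setminus\exp_{n-1}\lambda$, extracts a sunflower with core $x$, and then splits into the cases $x=0$ and $x\neq 0$: for $x=0$ it invokes the fact (Proposition~1 of \cite{Gutik-Sobol-2016}) that every non-zero $y$ has a neighbourhood contained in ${\uparrow}y$ to conclude that the family of singletons is locally finite, contradicting feeble compactness; for $x\neq 0$ it passes to the clopen feebly compact subsemilattice ${\uparrow}x$ and reduces to the zero-core case. You instead treat all cores $S$ uniformly: you let feeble compactness produce an accumulation point $x$ of the singleton family and then pin down $x=S$ using only separate continuity of the translations $y\mapsto y\cap\{\alpha\}$ and $y\mapsto y\cap S$ together with $T_1$. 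Your approach is more intrinsic to the semitopological-semilattice structure and avoids both the case split and the external neighbourhood description; the paper's approach trades this for direct re-use of structural results already established in \cite{Gutik-Sobol-2016}.
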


\begin{proof}
Suppose to the contrary that there exists a feebly compact $T_1$-semitopological semilattice $\exp_n\lambda$ which is not sequentially  pracompact. Then every dense subset $D$ of $\exp_n\lambda$ contains a sequence of points from $D$ which has no  a convergent subsequence.

By Proposition~1 of \cite{Gutik-Sobol-2016a} the subset $\exp_n\lambda\setminus\exp_{n-1}\lambda$ is dense in $\exp_n\lambda$ and by Proposition~1$(ii)$ of \cite{Gutik-Sobol-2016} every point of the set $\exp_n\lambda\setminus\exp_{n-1}\lambda$ is isolated in $\exp_n\lambda$. Then the set $\exp_n\lambda\setminus\exp_{n-1}\lambda$ contains an infinite sequence of points $\{x_p\colon p\in \mathbb{N}\}$ which has not a convergent subsequence. By Lemma~\ref{Subflower_Lemma} the sequence $\{x_p\colon p\in \mathbb{N}\}$ contains an infinite $\Delta$-subfamily, that is an infinite subsequence $\{x_{p_i}\colon i\in \mathbb{N}\}$ such that there exists $x\in \exp_n\lambda$ such that $x_{p_i}\cap x_{p_j}=x$ for any distinct $i,j\in\mathbb{N}$.

Suppose that $x=0$ is the zero of the semilattice $\exp_n\lambda$. Since the sequence $\{x_{p_i}\colon i\in \mathbb{N}\}$ is an infinite $\Delta$-subfamily, the intersection $\{x_{p_i}\colon i\in \mathbb{N}\}\cap{\uparrow}y$ contains at most one set for every non-zero element $y\in \exp_n\lambda$. Thus $\exp_n\lambda$ contains an infinite locally finite family of open non-empty subsets which contradicts the feeble compactness of $\exp_n\lambda$.

If $x$ is a non-zero element of the semilattice $\exp_n\lambda$ then by Proposition~1$(ii)$ of \cite{Gutik-Sobol-2016}, ${\uparrow}x$ is an open-and-closed subspace of $\exp_n\lambda$, and hence by Theorem~14 from \cite{Bagley-Connell-McKnight-Jr-1958} the space ${\uparrow}x$ is feebly compact. We observe that $x$ is zero of the semilattice ${\uparrow}x$, which contradicts so similarly the previous part of the proof.
We obtain a contradiction.
\end{proof}

\begin{proposition}\label{proposition-3}
Let $n$ be an arbitrary positive integer and $\lambda$ be an arbitrary infinite cardinal. Then every
feebly compact $T_1$-semitopological semilattice $\exp_n\lambda$ is totally countably pracompact.
\end{proposition}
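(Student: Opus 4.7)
The plan is to take $D=\exp_n\lambda\setminus\exp_{n-1}\lambda$ as the required dense subset: it is dense in $\exp_n\lambda$ by Proposition~1 of \cite{Gutik-Sobol-2016a} and consists of isolated points by Proposition~1$(ii)$ of \cite{Gutik-Sobol-2016}. What must be checked is that every sequence in $D$ admits a subsequence whose closure in $\exp_n\lambda$ is compact.

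Given a sequence $\{x_p\}\subseteq D$, if some value appears infinitely often the associated constant subsequence has singleton (hence compact) closure, so I may assume the $x_p$ are pairwise distinct. Applying the Sunflower Lemma (Lemma~\ref{Subflower_Lemma}) I extract an infinite $\Delta$-subfamily $\{x_{p_i}\}$ with common intersection $x\in\exp_n\lambda$; since the $x_{p_i}$ are distinct and of cardinality $n$, one has $|x|<n$. If $x\neq\varnothing$ I replace the ambient space by ${\uparrow}x$: by Proposition~1$(iii)$ of \cite{Gutik-Sobol-2016} it is an open-and-closed subset, hence feebly compact by Theorem~14 of \cite{Bagley-Connell-McKnight-Jr-1958}, it is a $T_1$ shift-continuous semilattice with zero $x$, and closures of its subsets coincide with their closures in $\exp_n\lambda$. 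This reduces everything to the case $x=\varnothing$.

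In that case Proposition~\ref{proposition-2} supplies a convergent subsequence $x_{p_{i_l}}\to y$. Separate continuity of $\cap$ then gives $y\cap x_{p_{i_l}}\to y\cap y=y$. The values $y\cap x_{p_{i_l}}$ range over the finitely many subsets of $y$, so I refine to a sub-subsequence on which $y\cap x_{p_{i_l}}$ equals some constant $w$; the $T_1$ axiom forces $w=y$, whence $y\subseteq x_{p_{i_l}}$ for every $l$, and the $\Delta$-structure yields $y\subseteq x_{p_{i_l}}\cap x_{p_{i_m}}=\varnothing$. Thus $y=\varnothing$.

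It remains to verify that $\overline{\{x_{p_{i_l}}:l\in\mathbb N\}}=\{x_{p_{i_l}}:l\in\mathbb N\}\cup\{\varnothing\}$ and that this set is compact. Any non-zero $y'$ distinct from every $x_{p_{i_l}}$ has the open neighborhood ${\uparrow}y'$ meeting the $\Delta$-family in at most one point (for $y'\subseteq x_{p_{i_l}}\cap x_{p_{i_m}}=\varnothing$ would force $y'=\varnothing$), which I can excise using the $T_1$ axiom, so $y'$ is not an accumulation point; each $x_{p_{i_l}}$ is isolated, hence not an accumulation point of the set either. Compactness then follows from $x_{p_{i_l}}\to\varnothing$: any open cover contains a member through $\varnothing$ which omits only finitely many terms of the sequence, and those are covered by finitely many further members. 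I expect the main obstacle to be the zero case: one needs both the convergent subsequence (from Proposition~\ref{proposition-2}) and the separate-continuity argument to pin the limit down to the zero, since a priori the limit could be anywhere.
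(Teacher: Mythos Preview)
Your proof is correct and shares the paper's overall skeleton: the same dense set $D=\exp_n\lambda\setminus\exp_{n-1}\lambda$, the Sunflower Lemma to extract a $\Delta$-subfamily, and the reduction of the case $x\neq\varnothing$ to the zero case via the clopen subsemilattice ${\uparrow}x$ (which is feebly compact by Theorem~14 of \cite{Bagley-Connell-McKnight-Jr-1958}).

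The genuine difference is how the zero case is handled. The paper never invokes Proposition~\ref{proposition-2}; instead it appeals to Lemma~1 of \cite{Gutik-Sobol-2016}, which says that every open neighbourhood $W(0)$ of the zero satisfies $D\subseteq W(0)\cup{\uparrow}y_1\cup\cdots\cup{\uparrow}y_k$ for finitely many nonzero $y_j$. Since each ${\uparrow}y_j$ meets the $\Delta$-family in at most one point, this immediately gives that the whole $\Delta$-subfamily $\{x_{p_i}\}$ converges to $0$ and that its closure $\{0\}\cup\{x_{p_i}\}$ is compact. Your route instead borrows a convergent subsequence from (the proof of) Proposition~\ref{proposition-2} and then uses separate continuity of $\cap$ together with the $T_1$ axiom to pin the limit down to $\varnothing$; after that your closure/compactness verification is essentially the same as the paper's. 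Two remarks: first, Proposition~\ref{proposition-2} as stated only asserts the existence of \emph{some} witnessing dense set, so what you are really citing is its proof, which shows that this particular $D$ works --- worth saying explicitly. Second, the paper's argument yields the slightly stronger conclusion that the full $\Delta$-subfamily (not just a further subsequence) has compact closure, and it avoids the dependency on Proposition~\ref{proposition-2}; your argument, on the other hand, makes pleasant use of the semitopological structure via the separate continuity of $\cap$, which the paper's proof does not exploit at all.
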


\begin{proof}
We put $D=\exp_n\lambda\setminus\exp_{n-1}\lambda$. By Proposition~1 of \cite{Gutik-Sobol-2016a} the subset $D$ is dense in $\exp_n\lambda$ and by Proposition~1$(ii)$ of \cite{Gutik-Sobol-2016} every point of the set $D$ is isolated in $\exp_n\lambda$. Fix an arbitrary sequence $\{x_p\colon p\in \mathbb{N}\}$ of points of $D$. By Lemma~\ref{Subflower_Lemma} the sequence $\{x_p\colon p\in \mathbb{N}\}$ contains an infinite $\Delta$-subfamily.

Suppose that $x=0$ is the zero of the semilattice $\exp_n\lambda$. Since the sequence $\{x_{p_i}\colon i\in \mathbb{N}\}$ is an infinite $\Delta$-subfamily, the intersection $\{x_{p_i}\colon i\in \mathbb{N}\}\cap{\uparrow}y$ contains at most one point of the sequence for every non-zero element $y\in \exp_n\lambda$. By Proposition~1$(ii)$ of \cite{Gutik-Sobol-2016} for every point $a\in \exp_n\lambda\setminus\{0\}$ there exists an open neighbourhood $U(a)$ of $a$ in $\exp_n\lambda$ such that $U(a)\subseteq {\uparrow}a$ and hence our assumption implies that zero $0$ is a unique accumulation point of the sequence $\{x_{p_i}\colon i\in \mathbb{N}\}$. Since by Lemma~1 from \cite{Gutik-Sobol-2016} for an arbitrary open neighbourhood $W(0)$ of zero $0$ in $\exp_n\lambda$ there exist finitely many non-zero elements $y_1,\ldots, y_{k}\in \exp_n\lambda$ such that
\begin{equation*}
  \left(\exp_n\lambda\setminus\exp_{n-1}\lambda\right)\subseteq W(0)\cup {\uparrow}{y_1}\cup\cdots\cup {\uparrow}{y_{k}},
\end{equation*}
we get that $\operatorname{cl}_{\exp_n\lambda}(\{x_{p_i}\colon i\in \mathbb{N}\})=\{0\}\cup\{x_{p_i}\colon i\in \mathbb{N}\}$ is a compact subset of $\exp_n\lambda$.

If $x$ is a non-zero element of the semilattice $\exp_n\lambda$ then by Proposition~1$(ii)$ of \cite{Gutik-Sobol-2016}, ${\uparrow}x$ is an open-and-closed subspace of $\exp_n\lambda$, and hence by Theorem~14 of \cite{Bagley-Connell-McKnight-Jr-1958} the space ${\uparrow}x$  is feebly compact. Then $x$ is zero of the semilattice ${\uparrow}x$ and by the previous part of the proof we have that $\operatorname{cl}_{\exp_n\lambda}(\{x_{p_i}\colon i\in \mathbb{N}\})=\{x\}\cup\{x_{p_i}\colon i\in \mathbb{N}\}$ is a compact subset of $\exp_n\lambda$.
\end{proof}

\begin{proposition}\label{proposition-4}
Let $n$ be any positive integer and $\lambda$ be any infinite cardinal. Then every $\mathfrak{D}(\omega)$-compact $T_1$-semitopological semilattice $\exp_n\lambda$ is feebly compact.
\end{proposition}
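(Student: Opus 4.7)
The plan is to argue by contrapositive: assuming $(\exp_n\lambda,\tau)$ is not feebly compact, I will manufacture a continuous map of $\exp_n\lambda$ onto an infinite subspace of $\mathfrak{D}(\omega)$. Since a compact subset of the discrete space $\mathfrak{D}(\omega)$ is necessarily finite, any such map refutes $\mathfrak{D}(\omega)$-compactness.

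Supposing $\exp_n\lambda$ is not feebly compact, I would fix an infinite locally finite family $\{U_m\colon m\in \mathbb{N}\}$ of non-empty open subsets. By Lemma~\ref{lemma-1}, each $U_m$ contains an isolated point $x_m$ of $\exp_n\lambda$. The key preliminary observation is that for every isolated point $x$ the singleton $\{x\}$ is itself an open neighbourhood of $x$, so by local finiteness $x$ can belong to only finitely many of the $U_m$; consequently, after passing to an infinite subsequence I may assume the $x_m$ are pairwise distinct. I would then show that $S=\{x_m\colon m\in \mathbb{N}\}$ is closed in $\exp_n\lambda$. Indeed, any $y\in \exp_n\lambda$ admits an open neighbourhood $V$ meeting $U_m$ only for $m$ in a finite set $F$; since $x_m\in V$ forces $V\cap U_m\neq\varnothing$, the neighbourhood $V$ contains at most $|F|$ points of $S$, and the $T_1$ hypothesis lets me shrink $V$ to peel off those finitely many $x_m$ different from $y$.

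Given that $S$ is closed and each of its points is isolated, I would define $f\colon \exp_n\lambda\to \mathfrak{D}(\omega)$ by sending $x_m$ to the $m$-th point of $\mathfrak{D}(\omega)$ and $\exp_n\lambda\setminus S$ to a single reserved point. Every subset of $\mathfrak{D}(\omega)$ is open, so continuity of $f$ reduces to the openness of each $\{x_m\}$ (clear, since $x_m$ is isolated) and of $\exp_n\lambda\setminus S$ (the complement of the closed set $S$). The image $f(\exp_n\lambda)$ is infinite and therefore not compact, contradicting $\mathfrak{D}(\omega)$-compactness.

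The main obstacle I anticipate is verifying that $S$ is closed; this is precisely the step in which local finiteness of $\{U_m\}$ enters the argument in an essential way, together with the $T_1$ property to separate a generic point from the finitely many $x_m$ lying in any given neighbourhood. Once $S$ is known to be closed and discrete, the construction of $f$ and the final contradiction are routine.
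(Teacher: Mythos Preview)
Your proof is correct and follows essentially the same route as the paper's: both arguments select isolated points from an infinite locally finite family of non-empty open sets via Lemma~\ref{lemma-1}, thin the sequence to pairwise distinct points, use local finiteness together with the $T_1$ axiom to see that the resulting set $S$ is closed, and then map $S$ bijectively onto $\mathfrak{D}(\omega)\setminus\{\text{one point}\}$ while collapsing the complement to that point. The only cosmetic difference is that the paper obtains closedness of $S$ in one line by noting that $\{\{x_m\}\colon m\in\mathbb{N}\}$ is a locally finite family of closed sets (so its union is closed), whereas you spell out the same fact directly.
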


\begin{proof}
Suppose to the contrary that there exists a $\mathfrak{D}(\omega)$-compact $T_1$-semitopological semilattice $\exp_n\lambda$ which is not feebly compact. Then there exists an infinite locally finite family $\mathscr{U}=\{U_i\}$ of open non-empty subsets of $\exp_n\lambda$. Without loss of generality we may assume that the family $\mathscr{U}=\{U_i\}$ is countable, i.e., $\mathscr{U}=\{U_i\colon i\in\mathbb{N}\}$. Lemma~\ref{lemma-1} implies that for every $U_i\in\mathscr{U}$ there exists $a_i\in U_i$ such that  $\mathscr{U}^*=\{\{a_i\}\colon i\in\mathbb{N}\}$ is a family of isolated points of $\exp_n\lambda$. Since the family $\mathscr{U}$ is locally finite, without loss of generality we may assume that $a_i\neq a_j$ for distinct $i,j\in\mathbb{N}$. The family $\mathscr{U}^*$ is locally finite as a refinement of a locally finite family $\mathscr{U}$.  Since  $\exp_n\lambda,\tau$ is a $T_1$-space, $\bigcup\mathscr{U}^*$ is a closed subset in $\exp_n\lambda$ and hence the map $f\colon \exp_n\lambda\to \mathbb{N}_{\mathfrak{d}}$, where $\mathbb{N}_{\mathfrak{d}}$ is the set of positive integers with the discrete topology, defined by the formula
\begin{equation*}
  f(b)=
\left\{
  \begin{array}{cl}
    1, & \hbox{if~} b\in\exp_n\lambda\setminus\bigcup\mathscr{U}^*;\\
    i+1, & \hbox{if~} b=a_i \hbox{~for some~} i\in\mathbb{N},
  \end{array}
\right.
\end{equation*}
is continuous. This contradicts $\mathfrak{D}(\omega)$-compactness of the space $\exp_n\lambda$, because every two infinite countable discrete spaces are homeomorphic.
\end{proof}

We summarise our results in the  following theorem.

\begin{theorem}\label{theorem-5}
Let $n$ be any positive integer and $\lambda$ be any infinite cardinal. Then for any $T_1$-semitopological semilattice $\exp_n\lambda$ the following conditions are equivalent:
\begin{itemize}
  \item[$(i)$] $\exp_n\lambda$ is sequentially pracompact;
  \item[$(ii)$] $\exp_n\lambda$ is totally countably pracompact;
  \item[$(iii)$] $\exp_n\lambda$ is feebly compact;
  \item[$(iv)$] $\exp_n\lambda$ is $\mathfrak{D}(\omega)$-compact.
\end{itemize}
\end{theorem}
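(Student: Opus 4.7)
The plan is to close a cycle of implications among the four conditions. Three of the arrows are already in place: Proposition~\ref{proposition-2} gives $(iii)\Rightarrow(i)$, Proposition~\ref{proposition-3} gives $(iii)\Rightarrow(ii)$, and Proposition~\ref{proposition-4} gives $(iv)\Rightarrow(iii)$. So it remains only to establish $(i)\Rightarrow(iii)$, $(ii)\Rightarrow(iii)$, and $(iii)\Rightarrow(iv)$, after which the four conditions are tied together.

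For $(i)\Rightarrow(iii)$ and $(ii)\Rightarrow(iii)$ I would route through countable pracompactness. Let $D$ be a dense subset of $\exp_n\lambda$ witnessing sequential pracompactness (respectively total countable pracompactness), and let $B\subseteq D$ be an arbitrary infinite subset. Picking an injective sequence in $B$, in case $(i)$ it has a convergent subsequence, whose limit is, in the $T_1$-setting, an accumulation point of the range of the sequence and hence of $B$; in case $(ii)$ it has a subsequence with compact closure in $\exp_n\lambda$, and an infinite subset of a compact $T_1$-space always has an accumulation point. Either way $\exp_n\lambda$ is countably pracompact, and by the equivalence of countable pracompactness and feeble compactness for $T_1$-semitopological semilattices $\exp_n\lambda$ established in~\cite{Gutik-Sobol-2016a}, condition $(iii)$ follows.

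For $(iii)\Rightarrow(iv)$, I would take an arbitrary continuous map $f\colon\exp_n\lambda\to\mathfrak{D}(\omega)$ and consider the family $\mathscr{F}=\{f^{-1}(y)\colon y\in f(\exp_n\lambda)\}$ of non-empty open preimages of singletons. The members of $\mathscr{F}$ are pairwise disjoint, and since for every $x\in\exp_n\lambda$ the set $f^{-1}(f(x))$ is an open neighborhood of $x$ meeting exactly one member of $\mathscr{F}$, the family $\mathscr{F}$ is discrete, in particular locally finite. Feeble compactness of $\exp_n\lambda$ therefore forces $\mathscr{F}$ to be finite, so $f(\exp_n\lambda)$ is finite and hence compact; this is $\mathfrak{D}(\omega)$-compactness.

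No serious obstacle is anticipated once Propositions~\ref{proposition-2}--\ref{proposition-4} and the equivalence from~\cite{Gutik-Sobol-2016a} are available. The one place where a little care is needed is the opening step of $(i)\Rightarrow(iii)$ and $(ii)\Rightarrow(iii)$, where the $T_1$ hypothesis is used to upgrade a convergent injective sequence or a compact closure into an actual accumulation point of an arbitrary infinite subset of the dense witness $D$.
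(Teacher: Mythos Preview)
Your proposal is correct and follows the same overall architecture as the paper: the hard directions $(iii)\Rightarrow(i)$, $(iii)\Rightarrow(ii)$, $(iv)\Rightarrow(iii)$ are taken from Propositions~\ref{proposition-2}--\ref{proposition-4}, and the remaining implications are the general ones displayed in the diagram, which the paper simply calls ``trivial'' while you spell them out. One small redundancy: in your $(i)\Rightarrow(iii)$ and $(ii)\Rightarrow(iii)$ you invoke the specific equivalence from~\cite{Gutik-Sobol-2016a}, but once you have countable pracompactness the implication to feeble compactness is a general fact (it is one of the arrows in the diagram) and needs nothing special about $\exp_n\lambda$.
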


\begin{proof}
Implications $(i)\Rightarrow(iii)$, $(ii)\Rightarrow(iii)$ and $(iii)\Rightarrow(iv)$ are trivial. The corresponding their converse implications $(iii)\Rightarrow(i)$, $(iii)\Rightarrow(ii)$ and $(iv)\Rightarrow(i ii)$ follow from Propositions~\ref{proposition-2}, \ref{proposition-3} and \ref{proposition-4}, respectively.
\end{proof}

It is well known that the (Tychonoff) product of pseudocompact spaces is not necessarily pseudocompact (see \cite[Section~3.10]{Engelking-1989}). On the other hand Comfort and Ross in \cite{Comfort-Ross-1966} proved that a Tychonoff product of an arbitrary family of pseudocompact topological groups is a pseudocompact topological group.  Ravsky in \cite{Ravsky-2003arxiv} generalized the Comfort--Ross Theorem and proved that a Tychonoff
product of an arbitrary non-empty family of feebly compact paratopological groups is feebly compact. Also, a counterpart of the Comfort--Ross Theorem for pseudocompact primitive topological inverse semigroups and  primitive inverse semiregular feebly compact semitopological semigroups with closed maximal subgroups were proved in \cite{Gutik-Pavlyk-2013} and \cite{Gutik-Ravsky-2015}, respectively.

Since a Tychonoff product of H-closed spaces is H-closed (see \cite[Theorem~3]{Chevalley-Frink-1941} or \cite[3.12.5~(d)]{Engelking-1989}) Theorem~\ref{theorem-5} implies a counterpart of the Comfort--Ross Theorem for feebly compact semitopological semilattices $\exp_n\lambda$:

\begin{corollary}\label{corollary-6}
Let $\left\{\exp_{n_i}\lambda_i\colon i\in\mathscr{I}\right\}$ be a family of non-empty feebly compact $T_1$-semitopological semilattices and $n_i\in\mathbb{N}$ for all $i\in\mathscr{I}$. Then the Tychonoff product $\prod\left\{\exp_{n_i}\lambda_i\colon i\in\mathscr{I}\right\}$ is feebly compact.
\end{corollary}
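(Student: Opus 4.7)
The plan is to route the argument through the class of $H$-closed spaces, which (unlike feeble compactness in general) is known to be preserved under arbitrary Tychonoff products. The three ingredients I would assemble are: (a)~the equivalence between feeble compactness and $H$-closedness on each factor, established in \cite{Gutik-Sobol-2016a} and recalled in the introduction above for every shift-continuous $T_1$-topology on $\exp_n\lambda$; (b)~the classical Chevalley--Frink theorem that arbitrary Tychonoff products of $H$-closed spaces are $H$-closed, cited in the paragraph preceding the corollary as \cite[Theorem~3]{Chevalley-Frink-1941}; and (c)~the general implication ``$H$-closed implies feebly compact'', which appears as the corresponding arrow in the implication diagram at the beginning of the paper.

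With these three facts in hand the argument is short. First, apply (a) factor-by-factor: under the hypothesis of the corollary, each $\exp_{n_i}\lambda_i$ is a feebly compact $T_1$-semitopological semilattice, hence $H$-closed. Second, apply (b) to conclude that the Tychonoff product $\prod\{\exp_{n_i}\lambda_i:i\in\mathscr{I}\}$ is itself $H$-closed. Third, apply (c) to conclude that this product is feebly compact, which is precisely the assertion of Corollary~\ref{corollary-6}.

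There is no genuine combinatorial obstacle here, since the heavy lifting has already been performed in Theorem~\ref{theorem-5} together with the equivalence established in \cite{Gutik-Sobol-2016a}. The only conceptual point worth stressing is the necessity of passing through $H$-closedness: a direct product argument for feeble compactness would fail in general, as feeble compactness is not productive outside of special classes (e.g.\ paratopological groups, as in \cite{Ravsky-2003arxiv}, or the special semigroup-theoretic settings surveyed in the introduction). It is precisely the $H$-closed detour that converts the per-factor hypothesis into a property that survives arbitrary Tychonoff products.
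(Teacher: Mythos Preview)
Your argument is correct and is exactly the route the paper takes: the paragraph immediately preceding Corollary~\ref{corollary-6} already states that the claim follows because products of $H$-closed spaces are $H$-closed (Chevalley--Frink), combined with the equivalence of feeble compactness and $H$-closedness on each factor from \cite{Gutik-Sobol-2016a} and the general implication $H$-closed $\Rightarrow$ feebly compact. There is nothing to add or correct.
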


\begin{definition}\label{definition-7}
If $\left\{X_i\colon i\in \mathscr{I}\right\}$ is a family of sets, $X=\prod\left\{X_i\colon i\in \mathscr{I}\right\}$ is their Cartesian product and $p$ is a point in $X$,
then the subset
\begin{equation*}
  \Sigma(p,X)=\left\{ x \in X\colon \left| \left\{ i\in\mathscr{I}\colon x(i)\neq p(i)\right\}\right|\leqslant\omega\right\}
\end{equation*}
of $X$ is called the \emph{$\Sigma$-product} of $\left\{X_i\colon i\in \mathscr{I}\right\}$ with the basis point $p\in X$. In the case when $\left\{X_i\colon i\in \mathscr{I}\right\}$ is a family of topological spaces we assume that $\Sigma(p,X)$ is a subspace of the Tychonoff product $X=\prod\left\{X_i\colon i\in \mathscr{I}\right\}$.
\end{definition}

It is obvious that if $\left\{X_i\colon i\in \mathscr{I}\right\}$ is a family of semilattices then $X=\prod\left\{X_i\colon i\in \mathscr{I}\right\}$  is a semilattice as well. Moreover $\Sigma(p,X)$ is a subsemilattice of $X$ for arbitrary $p\in X$. Then Theorem~\ref{theorem-5} and Proposition~2.2 of \cite{Gutik-Ravsky-20??} imply the following corollary.

\begin{corollary}\label{corollary-8}
Let $\left\{\exp_{n_i}\lambda_i\colon i\in\mathscr{I}\right\}$  be a family of non-empty feebly compact $T_1$-semitopologi\-cal semilattices and $n_i\in\mathbb{N}$ for all $i\in\mathscr{I}$. Then for every point $p$ of the product $X=\prod\left\{\exp_{n_i}\lambda_i \colon i\in\mathscr{I}\right\}$ the $\Sigma$-product $\Sigma(p,X)$ is feebly compact.
\end{corollary}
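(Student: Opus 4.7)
The plan is to use Theorem~\ref{theorem-5} to upgrade the hypothesis on each factor and then feed the strengthened hypothesis into the cited result of Gutik--Ravsky. In more detail, I first invoke the equivalences $(iii)\Leftrightarrow(i)\Leftrightarrow(ii)$ of Theorem~\ref{theorem-5}: since every $\exp_{n_i}\lambda_i$ is a feebly compact $T_1$-semitopological semilattice, each factor is in fact \emph{sequentially pracompact} (and also totally countably pracompact). This transfers the problem from a mere feeble compactness assumption — which as the authors note need not be preserved under Tychonoff products in general — to a significantly stronger covering-type property that behaves well under $\Sigma$-products.

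Next, I would apply Proposition~2.2 of \cite{Gutik-Ravsky-20??}, which is the quoted tool of the paper and asserts (in the form needed here) that the $\Sigma$-product of any family of sequentially pracompact spaces is feebly compact. Concretely, given the basis point $p\in X$ and any countable family $\{V_k\colon k\in\mathbb{N}\}$ of non-empty open subsets of $\Sigma(p,X)$, one picks for each factor a dense subset witnessing sequential pracompactness (which exists by Lemma~\ref{lemma-1}, namely the isolated points in $\exp_{n_i}\lambda_i\setminus\exp_{n_i-1}\lambda_i$), assembles a dense subset of $\Sigma(p,X)$ adapted to $p$, and applies the Gutik--Ravsky proposition to obtain the required accumulation point. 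The key structural fact that makes this go through is precisely that sequential pracompactness is a property stated in terms of a dense subset and sequences, and such properties pass to $\Sigma$-products even when the full Tychonoff product fails to inherit feeble compactness.

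Since the cited proposition does the actual work, the only step I would need to check carefully is that the hypotheses of that proposition are literally met: each factor is a non-empty $T_1$-space (given), is sequentially pracompact (from Theorem~\ref{theorem-5}), and $\Sigma(p,X)$ is indeed formed as a subspace of the Tychonoff product in the sense of Definition~\ref{definition-7}. No extra argument on the semilattice structure is required for the conclusion, although the authors' remark that $\Sigma(p,X)$ is a subsemilattice of $X$ ensures the result stays within the class of objects under study.

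The main obstacle, if any, is entirely external to this paper: one must trust the statement of Proposition~2.2 of \cite{Gutik-Ravsky-20??}. Given that tool, the corollary reduces to a one-line invocation of Theorem~\ref{theorem-5} to obtain sequential pracompactness of the factors, followed by the quoted $\Sigma$-product theorem to deduce feeble compactness of $\Sigma(p,X)$.
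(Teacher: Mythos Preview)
Your proposal is correct and matches the paper's own argument: the corollary is stated there as an immediate consequence of Theorem~\ref{theorem-5} (upgrading each feebly compact factor to a sequentially pracompact one) together with Proposition~2.2 of \cite{Gutik-Ravsky-20??}. The extra detail you sketch about dense sets of isolated points via Lemma~\ref{lemma-1} is not needed for the invocation, but it is consistent with the intended application and does no harm.
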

\section*{Acknowledgements}

The author acknowledge Alex Ravsky and the referee for their comments and
suggestions.

\end{document}